\documentclass[a4paper]{article}
\usepackage{amssymb,amsmath,amsthm,hyperref,enumitem,tikz,csquotes,subcaption,cleveref,wrapfig,url,authblk,doi}
\usepackage[margin=1in]{geometry}
\usetikzlibrary{matrix}
\tikzset{font=\footnotesize} 
\makeatletter
\newcommand\bibstyle@comma{\bibpunct(),a,,}
\makeatother

\newtheorem{theorem}{Theorem}[section]
\newtheorem{lemma}[theorem]{Lemma}
\newtheorem{corollary}[theorem]{Corollary}
\theoremstyle{definition}
\newtheorem{remark}[theorem]{Remark}
\newtheorem{example}[theorem]{Example}

\newcommand{\ex}{\mathbb{E}}
\newcommand{\m}{\mathrm{m}}
\newcommand{\h}{\mathrm{h}}
\newcommand{\e}{\mathrm{\ell}}

\newcommand{\g}{\mathrm{g}}
\renewcommand{\d}{\mathrm{d}}
\newcommand{\du}{\mathop{\d u}}
\newcommand{\F}{\bar{F}}
\renewcommand{\(}{\left(}
\renewcommand{\)}{\right)}
\newcommand{\mrl}{\preceq_{\text{mrl}}}
\newcommand{\hr}{\preceq_{\text{hr}}}
\newcommand{\st}{\preceq_{\text{st}}}

\begin{document}

\title{A Class of Distributions for Linear Demand Markets}
\author[1]{Stefanos Leonardos}
\author[2]{Costis Melolidakis}
\affil[1]{Singapore University of Technology and Design, 8 Somapah Rd, 487372 Singapore, {stefanos\_leonardos@sutd.edu.sg}}
\affil[2]{National and Kapodistrian University of Athens, Panepistimioupolis, 15784 Athens, Greece, {cmelol@math.uoa.gr}}

\maketitle

\begin{abstract}
In this paper, we study distributions that describe markets with linear stochastic demand. We express the price elasticity of expected demand in terms of the \emph{mean residual demand} (MRD) function of the demand distribution and characterize optimal prices or equivalently, points of unitary elasticity, as fixed points of the MRD function. This leads to economic interpretable conditions on the demand distribution under which such fixed points exists and are unique. In particular, markets with increasing price elasticity of expected demand that eventually become elastic correspond to distributions with \emph{decreasing generalized mean residual demand} (DGMRD) and finite second moment. DGMRD distributions strictly generalize the widely used \emph{increasing generalized failure rate} (IGFR) distributions. In real life economic applications, they arise naturally as mixtures of (possibly) IGFR distributions over disjoint intervals. We further elaborate on the relationship of the two classes and link their limiting behavior at infinity. We examine moment and closure properties of the DGMRD distributions that are important in economic applications and illustrate our results with examples.
\end{abstract}

\noindent{\textbf{Keywords}} Price Elasticity of Expected Demand, Decreasing Generalized Mean Residual Demand, Increasing Generalized Failure Rate, Unimodality, Fixed Points\\
\noindent \textbf{MSC}[2010]: 91B24, 90B99

\section{Introduction}\label{intro}

\subsection{Problem Formulation and Motivation}
Optimal pricing of monopolistic services and goods under uncertain demand is a recurrent theme in the revenue management literature. A non-exhaustive list includes \cite{Ra81}, \cite{Ra95}, \cite{Da01}, \cite{Zi06,Co12} and more recently \cite{Co15,Ch17}. The tractability of this problem is closely related to the unimodality of the associated revenue function or equivalently to the existence of a unique optimal price for the seller. Accordingly, a central question in this line of research is the study of conditions on the distribution of the source of uncertainty that yield a unimodal revenue function. The particular case in which the monopolist is selling a single good and uncertainty concerns the valuation of the buyer has been settled in \cite{La99,La01} and \cite{Be07}. Specifically, if the distribution of the valuation satisfies the \emph{increasing generalized failure} (IGFR) property, then the seller's revenue function is unimodal. The class of distributions with the IGFR property includes most distributions that are commonly used in economic applications \cite{Pa05,La06,Ba13}.\par
In the present paper, we are concerned with the study of unimodality conditions in a more general formulation of this problem. Specifically, we consider a seller who is selling physical goods to a buyer that may buy several units. The buyer is privately informed about her type $\alpha$, while the seller only knows the distribution of $\alpha$. Here, $\alpha$ can be interpreted as the \emph{demand level} or more roughly as the amount of goods that the buyer is willing to buy. Equivalently, $\alpha$ can be thought of as the market demand in the presence of several buyers, each of which is willing to buy one unit of the service or good. In any case, the seller's expected revenue function is given by 
\begin{equation}\label{revenue}R\(p\)=p\ex \(D\(p\mid \alpha\)\)\end{equation}
where $p$ denotes the seller's price, $D\(p\mid \alpha\)$ the demand at price $p$, given that the buyer's realized type is $\alpha$ and $\ex$ the expectation over the distribution of $\alpha$. We assume that $D\(p\mid \alpha\)$ is continuous and non-increasing in $p$. The seller's objective is to determine the optimal price $p^*$ that maximizes $R\(p\)$. By differentiating $R\(p\)$, the seller's first order condition can be written as 
\begin{equation}\label{fixedpoint} p=-\frac{\ex\(D\(p\mid \alpha\)\)}{\frac{\d}{\d p}\ex\(D\(p\mid \alpha\)\)}\end{equation}
Given that $\varepsilon\(p\):=- \frac{\d \ex\(D\(p\mid \alpha\)\)/ \ex\(D\(p\mid \alpha\)\)}{\d p/ p}$ is the price elasticity of expected demand, cf. \cite{Xu10}, the solutions of \eqref{fixedpoint} correspond to the points of unitary price elasticity of expected demand. \par
Depending on the specific expression for $D\(p\mid \alpha\)$, \eqref{fixedpoint} may have a single, multiple or even no solutions. However, in this setting, the IGFR condition may not directly apply to yield a unimodality condition since the expression in \Cref{fixedpoint} requires information about the whole range of the distribution (evaluation of the conditional expected demand and its derivative) and not only about its local behavior at the current demand level. In addition, the IGFR condition -- although particularly inclusive in terms of common distributions \cite{Ba13} -- is restricted to distributions that are defined over connected intervals \cite{La06}. This poses a restriction to study scenario analysis of real-life economic applications, at which sellers often weigh different beliefs over disjoint intervals that correspond to low, modal and high (extreme) demand realizations. 

\subsection{Model and Results} 
Motivated by the shortcomings of the IGFR property to simplify the seller's pricing problem in this setting, we seek to formulate an alternative condition on the distribution of the random demand -- or equivalently on the seller's belief about it -- that will yield a unimodal revenue function in equation \eqref{revenue}, i.e., a unique solution to equation \eqref{fixedpoint}. Our focus is on the particular instantation of the additive demand model introduced by \cite{Mi59}, with the common assumption of linear deterministic component, studied (among others) in \cite{Pe99, Hu13} and \cite{Co15}. Specifically, let $D\(p\mid \alpha\)=\(\alpha-p\)_+$, where $\alpha$ denotes the random demand level. We assume that $\alpha$ is a non-negative random variable with continuous cummulative distribution function (cdf) $F$, tail $\F:=1-F$ and finite expectation\footnote{For one of our results, \Cref{thm:main}, we will also require that $\ex \alpha^2$ is also finite. However, unless stated otherwise, we do not make this assumption.}, $\ex \alpha<+\infty$. For the support of $\alpha$, let $L:=\sup{\{p\ge0, F\(p\)=0\}}\ge0$ and $H:=\inf{\{p\ge0: F\(p\)=1\}}\le +\infty$. Using this notation, \eqref{fixedpoint} can be expressed in terms of the \emph{mean residual demand} (MRD) function, defined as
\begin{equation}\label{mrl}\m\(p\):=\begin{cases}\ex\(\alpha-p\mid \alpha>p\)=\displaystyle\frac{1}{\F\(p\)}\int_{p}^{+\infty}\F\(u\)\du, & \text{if } p<H\\0, & \text{otherwise}\end{cases}
\end{equation}
In reliability theory, $\m\(p\)$ is known as the mean residual life (MRL) function, see, e.g., \cite{Sh07} or \cite{Lax06}. In particular, solutions of \eqref{fixedpoint} are precisely solutions of the fixed point equation $p=\m\(p\)$. This is shown in \Cref{lem:easy}. Such fixed points may also be of interest in problems of broader economic and mathematical context, see e.g., \cite{Ha81} and \cite{Ba05}. \par
According to the previous discussion, our aim is to study fixed points of the MRD function, i.e., solutions to the equation $\m\(p\)=p$ for $p>0$. To study this equation, we introduce the \emph{generalized mean residual demand} (GMRD) function, $\e\(p\):=\m\(p\)/p$, for $0<p<H$, cf. \eqref{gmrl}, which corresponds to the inverse of the price elasticity of expected demand. It follows that prices $p^*$ with unitary price elasticity which maximize the seller's expected revenue, satisfy $\e\(p^*\)=1$ or equivalently $p^*=\m\(p^*\)$. In turn, this implies that a sufficient condition for the unimodality of the seller's expected revenue function in a market with linear demand is that the MRD function of the associated demand distribution has a unique fixed point. If the expected demand has increasing price elasticity and eventually becomes elastic then such a fixed point exists and is unique. In terms of the demand distribution this is equivalent to the property that $\e\(p\)$ is decreasing and eventually becomes less than $1$. The derivation of necessary and sufficient conditions for the unimodality of the expected revenue function is the main result of \Cref{motivation} and is formally established in \Cref{thm:main}.\par
An immediate implication of this result is that markets with increasing price elasticity of expected demand can be modelled via distributions that satisfy the \emph{decreasing generalized mean residual demand} (DGMRD) property. As mentioned above, if demand uncertainty is exogenous and corresponds to the buyer's valuation for a single product unit, increasingly elastic markets are described by distributions with \emph{increasing generalized failure rate} (IGFR), see \cite{La01} and \cite{Be07}. In \Cref{specialcase}, we formulate this as a subcase of the current problem. To further elaborate on their connection to the present setting, we analyze the relationship of IGFR and DGMRD distributions and study their properties. In \Cref{classes}, we provide an alternative proof to the well known fact (see \cite{Be98,Ka14} that DGMRD distributions generalize the IGFR distributions and establish that the converse is also true if the MRD function is log-convex. A commonly used distribution that is DGMRD but not IGFR is the Birnbaum-Saunders distribution for specific values of its parameters, cf. \Cref{birnbaum}. \par
DGMRD distributions arise naturally in economic applications as mixtures of (possibly IGFR) distributions. In particular, they provide the possibility to study distributions defined over disjoint, continuous intervals and thus, provide a useful generalization over IGFR distributions for practical pricing scenarios\footnote{Discrete IGFR distributions offer another possibility to study such cases \cite{Ba13}.}. Since IGFR distributions are restricted to continuous intervals, this property can be exploited to deal with demand distributions that are concentrated around several distinct levels, such as low and high or low, high and intermediate demand. This allows the study of multiple scenarios in the same model to account for the probability of demand shocks triggered by unpredictable catastrophic events, technological innovations or abrupt changes in consumers' brand preferences \cite{Ma03,Lo09,Ba17}. Specifically, a seller may encounter a demand distribution that is concentrated with high probability over a connected interval -- main scenario -- and with lower probabilities over extreme, but smaller intervals, that correspond to less likely, but very high or very low demand realizations. In general, dropping the requirement of a connected interval allows for greater flexibility to the theoretical modelling of situations that arise in common business practice \cite{Ya17}. From a technical perspective, the current analysis retains only the minimum requirement that $F$ is continuous. This is satisfied as long as the distribution of the random demand is atomless, i.e., as long as there do not exist single points with positive probability, even if the distribution is supported over disjoint intervals\footnote{In technical terms, this means that they analysis does not require $F$ to be absolutely continuous, i.e., to possess a density $f=F'$. In fact, the analyis extends even to singular distributions but since such examples are not relevant for economic applications, we defer their discussion in a different context, see \cite{Leo21}.}. \par
We proceed with the study of moment and closure properties of DGMRD distributions that are useful in economic modelling. In \Cref{moments}, we show that the moments of DGMRD distributions with unbounded support are linked to their limiting behavior at infinity. Specifically, if the GMRD function tends to $c\ge0$ as $p\to+\infty$, then for any $n>0$, its $\(n+1\)$-th moment is finite if and only if $c<1/n$. This implies, that markets with increasing and eventually elastic demand, i.e., $\e\(p\)<1$ for every $p$ sufficiently large, correspond to DGMRD distributions with finite second moment. Hence, \Cref{moments} allows the formulation of a technical condition as a moment condition. In \cite{Leo20}, we show that in the equilibrium analysis of horizontal competition, the number $n$ of competitors imposes a finiteness condition on the $n$-th moment of the distribution. In \Cref{limiting}, we study the relationship in the limiting behavior of the GMRD and GFR functions and link \Cref{moments} to Theorem 2 of \cite{La06}. In sum, \Cref{classes,limiting} along with \Cref{birnbaum,ex:uniform} establish the relationship and highlight the differences between the DGMRD and IGFR classes of distributions. \par
Finally, we examine closure properties of the DGMRD and of the smaller \emph{decreasing MRD} (DMRD) class of distributions, and compare our findings with \cite{Pa05} and \cite{Ba13}. Such properties are relevant for the modelling of economic applications in which the potential seller updates her information about the demand distribution, aggregates different demands, reeestimates her expectations about the demand or gains access to more concrete information. In mathematical terms, these actions are expressed via increasing or decreasing transformations, convolutions and shifting, cf. \Cref{thm:concave} and \Cref{cor:shift} or scale transformations and truncations, cf. \Cref{thm:truncate}. We conclude our presentation with a discussion of the limitations of the current results along wih open questions in \Cref{sec:discussion}.

\subsection{Related Literature}\label{sub:related}
The MRD and GMRD functions have been studied in \cite{Ha81} and \cite{Gu88} and more recently in the survey of \cite{Ba05} in the context of reliability and statistical analysis with scarce references to economic applications. In revenue management, the MRD and GMRD functions naturally arise in problems with demand uncertainty. \cite{Ma18,Lu16,So09,So08,Pe99}, and references cited therein, study the tail of the distribution of the source of uncertainty, see e.g., \cite{So09}, Lemma 1 and \cite{So08}, equation $\(2\)$. In all these cases, the use of the currently defined GMRD function aids for a more succinct representation and the DGMRD condition provides a potential technical condition for refinement of the respective results. Similar (unimodality and elasticity) conditions are studied in \cite{Be05,Ko11,Lu13} and, in a spirit more similar to ours, in \cite{La06,Be07} and \cite{Ba13}. Questions that deal with demand uncertainty are formulated in \cite{Co15} and \cite{Ch17}. Their findings shed new light on the exact trade-off between generality of technical assumptions on the demand distribution and limitations on the demand curve. In particular, both papers provide novel perspectives on the microfoundations of the linear demand model -- e.g., as a good approximation of various demand curves -- and, thus, justify its use in a wider (than previously thought) spectrum of real-life applications.
\par
In \cite{Leon21}, we study the problem of monopoly pricing under demand uncertainty in a vertical market. First, we recover the present expression for the price elasticity of expected demand and the same unimodality conditions in a relatively more general setting. Unlike the current technical treatment, we then turn to the effects on optimal pricing of the various market characteristics, as expressed by features of the demand distribution. They key insight is that the present characterization of the seller's optimal price via the MRD function, allows for the application of the theory of stochastic orderings, \cite{Sh07,Lax06}. This leads to a diverse comparative statics analysis on the various demand features that challenges existing insights about the effects of market size, demand transformations and demand variability on monopoly pricing. Under various perspectives, demand uncertainty in supply chains (verticl markets) is also studied by \cite{Wo97,Li05,Xu10} and \cite{Li17} for more general distributions (i.e., beyond the linear model), but typically, under the more restrictive IFR assumption on the demand distribution.

\section{Unimodality of the seller's revenue function: the DGMRD property}\label{motivation} 
Our goal in this Section is to establish necessary and sufficient conditions for the unimodality of the seller's revenue function. This is the statement of \Cref{thm:main}, which crucially relies on the expression of the price elasticity of expected demand via the \emph{generalized mean residual demand (GMRD)} function which is derived next. To that end, we first express \eqref{fixedpoint} in terms of the MRD function $\m\(p\)$. Under mild analytical assumptions on $D\(p\mid \alpha\)$, we have that $\frac{\d}{\d p}\ex\(D\(p\mid \alpha\)\)=\ex\(\frac{\partial}{\partial p} D\(p\mid \alpha\)\)$, \cite{Fl73}. However, in the specific case that $D\(p\mid \alpha\)=\(\alpha-p\)_+$, this can be derived in a straightforward way as shown in \Cref{diffa}.
\begin{lemma}\label{diffa}
If $\alpha$ is a non-negative random variable with finite expectation $\ex\alpha<+\infty$ and continuous distribution function $F$, then $\frac{\d}{\d p} \ex\(\alpha-p\)_+=\ex \(\frac{\partial \(\alpha-p\)_+}{\partial p}\)=-\F\(p\)$ for any $p>0$.
\end{lemma}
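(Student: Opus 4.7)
The plan is to reduce both expressions in the statement to $-\F(p)$ by way of the tail-integral representation of $\ex\(\alpha-p\)_+$.

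First, since $\(\alpha-p\)_+$ is a non-negative random variable, I would apply the standard identity $\ex X=\int_0^{+\infty}P\(X>t\)\dt$ to $X=\(\alpha-p\)_+$, and then substitute $u=p+t$. Using $P\(\alpha>p+t\)=\F\(p+t\)$, this yields
\begin{equation*}
\ex\(\alpha-p\)_+=\int_0^{+\infty}\F\(p+t\)\,\d t=\int_p^{+\infty}\F\(u\)\du,
\end{equation*}
which is finite because $\ex\alpha<+\infty$. Since $F$ is continuous, so is $\F$, and the fundamental theorem of calculus gives $\frac{\d}{\d p}\int_p^{+\infty}\F\(u\)\du=-\F\(p\)$.

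Second, for the interchange $\frac{\d}{\d p}\ex\(\alpha-p\)_+=\ex\(\frac{\partial\(\alpha-p\)_+}{\partial p}\)$, I would compute the pointwise derivative explicitly: for every $\alpha\neq p$ the map $p\mapsto \(\alpha-p\)_+$ is differentiable with derivative $-\mathbf{1}\{\alpha>p\}$. Because $F$ is continuous, $P\(\alpha=p\)=0$, so this partial derivative is defined almost surely, and taking expectations gives $\ex\(-\mathbf{1}\{\alpha>p\}\)=-P\(\alpha>p\)=-\F\(p\)$, agreeing with the first calculation. (Alternatively, one may justify the interchange by dominated convergence: the difference quotient $\frac{\(\alpha-p-h\)_+-\(\alpha-p\)_+}{h}$ is uniformly bounded by $1$ in absolute value and converges pointwise to $-\mathbf{1}\{\alpha>p\}$ off the null set $\{\alpha=p\}$.)

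The argument is essentially routine; the only point that deserves care is the treatment of the non-differentiability of $\(\alpha-p\)_+$ at $\alpha=p$, which is handled by the continuity of $F$ ensuring that this exceptional event has zero probability. No moment assumption beyond $\ex\alpha<+\infty$ is needed, since the tail integral $\int_p^{+\infty}\F\(u\)\du$ is then finite for every $p\ge 0$.
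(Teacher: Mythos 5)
Your proof is correct, but its main line of argument differs from the paper's. The paper proves the lemma head-on: it forms the difference quotient $K_h\(\alpha\)=-\frac1h\left[\(\alpha-p-h\)_+-\(\alpha-p\)_+\right]$, writes it explicitly as $\mathbf{1}_{\{\alpha>p+h\}}+\frac{\alpha-p}{h}\mathbf{1}_{\{p<\alpha\le p+h\}}$, applies dominated convergence separately for $h\to0^+$ and $h\to0^-$ (obtaining $P\(\alpha>p\)$ and $P\(\alpha\ge p\)$ respectively), and invokes non-atomicity of $F$ to make the two one-sided limits agree --- essentially the parenthetical alternative you mention at the end. Your primary route instead computes $\ex\(\alpha-p\)_+=\int_p^{+\infty}\F\(u\)\du$ from the tail formula and differentiates by the fundamental theorem of calculus at the continuity point $p$, then separately evaluates $\ex\(\partial_p\(\alpha-p\)_+\)$ almost surely; since both outer quantities equal $-\F\(p\)$, the stated chain of equalities follows. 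This is valid and arguably cleaner: the FTC delivers the two-sided derivative in one stroke with no left/right case split, and it makes the lemma an immediate consequence of the tail-integral representation that the paper only introduces afterwards, in the proof of \Cref{lem:easy}. What the paper's route buys is a self-contained differentiation-under-the-expectation argument that does not presuppose that representation. Both arguments are complete; your handling of the kink at $\alpha=p$ via $P\(\alpha=p\)=0$ is exactly the point that needs care, and you address it.
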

\begin{proof}[\textbf{Proof}] Let $K_{h}\(\alpha\):=-\frac1h\,\left[\(\alpha-p-h\)_+-\(\alpha-p\)_+\right]$ and take $h>0$. Then, $K_h\(\alpha\)= \mathbf{1}_{\{\alpha>p+h\}} + \dfrac{\alpha-p}{h}\mathbf{1}_{\{p< \alpha \le p+h\}}$ and therefore $\lim_{h\to 0+}K_{h}\(\alpha\)=\mathbf{1}_{\{\alpha>p\}}$. Since $0\le K_h\(\alpha\) \le 1$ for all $\alpha$, the dominated convergence theorem implies that $\lim_{h\to0+}\ex\(K_h\(\alpha\)\) = P\(\alpha>p\)$. In a similar fashion, one may show that $\lim_{h\to0-}\ex\(K_h\(\alpha\)\) = P\(\alpha \geq p\)$. Since the distribution of $\alpha$ is non-atomic, $P\(\alpha>p\) = P\(\alpha \geq p\)$ and hence, $\lim_{h\to0}\ex\(K_h\(\alpha\)\) = \F\(p\)$. By the definition of $K_h\(\alpha\)$, it follows that $\lim_{h\to0}\ex\(K_h\(\alpha\)\) = -\frac{\d}{\d p}\ex\(\alpha-p\)_+$, which concludes the proof.
\end{proof}
\noindent Using \Cref{diffa}, the following formulation of \eqref{fixedpoint} is now immediate.
\begin{lemma}\label{lem:easy}
In the linear demand case $D\(p\mid \alpha\)=\(\alpha-p\)_+$, the seller's first order condition, \eqref{fixedpoint}, can be written as 
\begin{equation}\label{fixedmrl}p=\m\(p\)\end{equation} where $\m\(p\)$ denotes the MRD function of the demand distribution. \end{lemma}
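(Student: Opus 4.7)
The plan is to substitute the linear demand form into the first-order condition \eqref{fixedpoint}, handle the denominator via \Cref{diffa}, and then recognize the resulting expression as $\m(p)$ by its definition in \eqref{mrl}. Specifically, with $D(p\mid\alpha)=(\alpha-p)_+$, the numerator on the right-hand side of \eqref{fixedpoint} equals $\ex(\alpha-p)_+$, while \Cref{diffa} evaluates the denominator as $\frac{\d}{\d p}\ex(\alpha-p)_+ = -\F(p)$. The two minus signs cancel, leaving $\ex(\alpha-p)_+/\F(p)$.

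The remaining step is to identify this ratio with $\m(p)$. I would do this in two lines. First, since $(\alpha-p)_+ = (\alpha-p)\mathbf{1}_{\{\alpha>p\}}$ almost surely (the point $\alpha=p$ has probability zero by continuity of $F$), one has
\[
\ex(\alpha-p)_+ = \ex\bigl[(\alpha-p)\mathbf{1}_{\{\alpha>p\}}\bigr] = \F(p)\,\ex(\alpha-p\mid \alpha>p),
\]
for any $p<H$, which gives exactly $\m(p)\,\F(p)$ by the first line of \eqref{mrl}. Alternatively, and perhaps cleaner to insert into the paper, one can apply the tail-integral identity $\ex(\alpha-p)_+=\int_p^{+\infty}\F(u)\du$ (Fubini on $\{(u,\omega): p<u<\alpha(\omega)\}$) and match directly with the integral representation of $\m(p)$ in \eqref{mrl}.

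Dividing by $\F(p)$, which is strictly positive for $p<H$, converts the first-order condition into $p=\m(p)$, as desired. For $p\ge H$ the expected revenue $R(p)=p\,\ex(\alpha-p)_+$ vanishes and no interior fixed point can occur, so restricting attention to $p<H$ is without loss.

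There is essentially no obstacle here: \Cref{diffa} already supplies the only analytically delicate ingredient (differentiation under the expectation for the non-smooth map $\alpha\mapsto(\alpha-p)_+$), and everything else is a direct rewriting using the definition of $\m(p)$. The only point worth flagging in the write-up is the use of continuity of $F$ to equate $(\alpha-p)_+$ with $(\alpha-p)\mathbf{1}_{\{\alpha>p\}}$ in expectation, or equivalently to ensure that $\F(p)>0$ throughout the relevant range $p<H$.
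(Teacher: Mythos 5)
Your proposal is correct and follows essentially the same route as the paper: apply \Cref{diffa} to the denominator of \eqref{fixedpoint}, use the tail-integral identity $\ex\(\alpha-p\)_+=\int_p^{+\infty}\F\(u\)\du$ to identify the numerator with $\m\(p\)\F\(p\)$ via \eqref{mrl}, and cancel $\F\(p\)>0$ for $p<H$. The alternative conditional-expectation phrasing you offer is equivalent, and your remarks on continuity of $F$ and on $p\ge H$ are sensible but not needed beyond what the paper already assumes.
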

\begin{proof}
Since $\(\alpha-p\)_+$ is non-negative, we may write \[\ex\(\alpha-p\)_{+} = \int_{0}^{\infty}P\(\(\alpha-p\)_{+}>u\) \du =\int_{p}^{\infty}\F\(u\)\du,\] for $0 \le p<H$, see \cite{Bi86}. Using \eqref{mrl}, we thus, have $\ex\(\alpha-p\)_+=\m\(p\)\F\(p\)$ and \eqref{fixedpoint} takes the form $p=\m\(p\)$.
\end{proof}

From the buyer's revenue maximization perspective, we are interested in conditions for the existence and uniqueness of solutions of \eqref{fixedmrl}. To study this problem, we define the \emph{generalized mean residual demand (GMRD)} function
\begin{equation}\label{gmrl}\e\(p\):=\frac{\m\(p\)}{p}=\frac{1}{p\F\(p\)}\int_{p}^{+\infty}\F\(u\)\du\end{equation}
for all $0<p<H$. We say that a random variable $D$ has the \emph{DGMRD property}, if $\e\(p\)$ is non-increasing in $p$ for $0<p<H$. While the MRD function at a point $p$ expresses the expected additional demand given that current demand has reached (or exceeded) the threshold $p$, the GMRD function expresses the corresponding expected additional demand as a percentage of the current demand. From an economic perspective, $\e\(p\)$ has an appealing interpretation, since it is the inverse of the price elasticity of the \emph{expected} demand, $\varepsilon\(p\):=-p\cdot\frac{\d}{\d p}\ex\(D\(p\mid\alpha\)\)/\ex\(D\(p\mid\alpha\)\)$,
\begin{equation}\label{elasticity}\e\(p\)=\frac{\m\(p\)}{p}=\(\frac{\F\(p\)}{\m\(p\)\F\(p\)}\cdot p\)^{-1}=\varepsilon\(p\)^{-1}\end{equation}
Thus, demand distributions with the DGMRD property precisely capture markets of goods with increasing price elasticity of expected demand. Moreover, together with \eqref{fixedmrl}, \eqref{elasticity} implies that the seller's revenue is maximized at prices $p^*$ with unitary price elasticity of \emph{expected} demand. In non-trivial, realistic problems, demand eventually becomes elastic, see also \cite{La06}. Accordingly, let $p_1:=\sup{\{p\ge0:\e\(p\)\ge 1\}}$ and assume that $p_1<+\infty$ or equivalently that the price elasticity of expected demand, eventually becomes greater than $1$. For a continuous distribution $F$ with finite expectation $\ex \alpha$, such that $F\(0\)=0$, we have that $\m\(0\)=\ex \alpha>0$ and hence, $p_1>0$. Combining the above, we obtain necessary and sufficient conditions for the unimodality of the seller's revenue function $R\(p\)$, or equivalently for the existence and uniqueness of a solution of \eqref{fixedmrl}.
\begin{theorem}\label{thm:main} Suppose that $\alpha$ is a random variable with continuous distribution $F$, $F\(0\)=0$, and finite expectation, such that $p_1<+\infty$. The seller's revenue function $R\(p\)=p\ex\(\alpha-p\)_+$ is maximized at all points $p^*$ with unitary elasticity of expected demand, i.e., at all points $p^*$ that satisfy $\e\(p\)=1$ or equivalently, $p^*=\m\(p^*\)$. If $\e\(p\)$ is strictly decreasing, then a fixed point $p^*$ exists and is unique. 
\end{theorem}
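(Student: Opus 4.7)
The plan is to compute $R'(p)$ explicitly, identify its zero set with the fixed-point set of $\m$, and then use continuity and monotonicity of $\e$ to locate and count the maximizers. The key tool is \Cref{diffa}, which hands us the derivative of $\ex(\alpha-p)_+$.

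First I would differentiate $R(p) = p\ex(\alpha - p)_+$ via \Cref{diffa}, obtaining $R'(p) = \ex(\alpha - p)_+ - p\F(p)$. Substituting the identity $\ex(\alpha - p)_+ = \m(p)\F(p)$ established in the proof of \Cref{lem:easy}, this rearranges to
\[
R'(p) = \F(p)\bigl(\m(p) - p\bigr) = p\,\F(p)\bigl(\e(p) - 1\bigr)
\]
for $0 < p < H$. On the interior, where $\F(p) > 0$, the critical equation $R'(p) = 0$ is therefore equivalent to $\e(p) = 1$, i.e.\ to $p$ being a fixed point of $\m$. Since $R(0) = 0$, $R(p) = 0$ for $p \geq H$, and $R'(p) > 0$ for small $p > 0$ (because $\m(p) \to \ex\alpha > 0$ as $p \to 0^+$ forces $\e(p) \to +\infty$), the continuous function $R$ attains its supremum at some interior $p^* \in (0, H)$, at which necessarily $R'(p^*) = 0$, and hence $\e(p^*) = 1$. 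This yields the first assertion.

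For the second assertion, suppose $\e$ is strictly decreasing. Note that $\e$ is continuous on $(0, H)$ since $\F$ is continuous and $\int_p^{+\infty}\F(u)\,\du$ depends continuously on $p$. Combined with $\e(p) \to +\infty$ as $p \to 0^+$ and $\e(p) < 1$ for all $p > p_1$ (which holds because $p_1 < +\infty$), the intermediate value theorem produces a point $p^* \in (0, p_1]$ with $\e(p^*) = 1$. Strict monotonicity immediately gives uniqueness; moreover it forces $\e(p) > 1$ on $(0, p^*)$ and $\e(p) < 1$ on $(p^*, H)$, so via the derivative formula above $R$ is strictly increasing on $(0, p^*)$ and strictly decreasing on $(p^*, H)$, confirming that $p^*$ is the unique global maximizer.

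The whole argument hinges on the derivative identity $R'(p) = \F(p)(\m(p) - p)$, and \Cref{diffa} together with the representation $\ex(\alpha-p)_+ = \m(p)\F(p)$ makes this essentially automatic. The remainder is a standard intermediate-value-and-sign-tracking exercise, so I do not anticipate any genuine obstacle; the only subtle check is that $\e$ blows up at $0^+$, which follows from $\F(0) = 1$ (guaranteed by $F(0) = 0$) and the positivity of $\ex\alpha$.
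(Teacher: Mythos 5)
The proposal is correct and takes essentially the same route as the paper: it reduces the first-order condition to $\e\(p\)=1$ via \Cref{diffa} and \Cref{lem:easy}, then combines $\e\(p\)\to+\infty$ as $p\to0^+$, $\e\(p\)<1$ beyond $p_1$, continuity, and strict monotonicity to get existence and uniqueness. Your version is if anything slightly more explicit than the paper's, since you write out $R'\(p\)=\F\(p\)\(\m\(p\)-p\)$ and track its sign to confirm the critical point is a global maximum.
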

\begin{proof}
To establish the first part, it remains to check that any point satisfying \eqref{fixedmrl} corresponds to a maximum under the assumption that $\e\(p\)$ is strictly decreasing. Clearly, $\e\(p\)$ is continuous and since $\m\(0\)=\ex\alpha<+\infty$, we have that $\lim_{p\to 0^+}\e\(p\)=+\infty$. Hence, for values of $p$ close to $0$, demand is inelastic and the seller's revenue increases as prices increase. However, the limiting behavior of $\e\(p\)$ as $p$ approaches $H$ from the left may vary, depending on whether $H$ is finite or not. If $H$ is finite, i.e., if the support of $\alpha$ is bounded, then $\lim_{p\to H-}\e\(p\)=0$.  Hence, in this case, demand eventually becomes elastic and a critical point $p^*\in\(0,H\)$ that maximizes $R\(p\)$ exists without any further conditions. The assumption that $\e\(p\)$ is strictly decreasing, establishes the uniqueness of $p^*$. If $H=+\infty$, then an optimal solution $p^*$ may not exist because the limiting behavior of $\m\(p\)$, as $p\to+\infty$, may vary, see e.g., the Pareto distribution in \Cref{pareto}. However, under the assumption that $\e\(p\)$ is strictly decreasing and that $p_1<+\infty$, such a critical $p^*$ exists and is unique.
\end{proof}

\begin{remark}
The assumption $p_1<+\infty$ is equivalent to the condition that the distribution of $\alpha$ has finite second moment. Indeed, as we show in \Cref{moments}, if the support of $\alpha$ is unbounded, and $\e\(p\)$ is decreasing, then, $\lim_{p\to+\infty}\e\(p\)<1$ if and only if $\ex \alpha^2$ is finite. The assumption of strict monotonocity eliminates intervals with $\m\(p\)=p$, in which multiple consecutive solutions occur. However, it may be relaxed to weak monotonicity without significant loss of generality. This relies on the explicit characterization of distributions with MRD functions that contain linear segments which is given in Proposition 10 of \cite{Ha81}. Namely, $\m\(p\)=p$ on some interval $J=[a,b]\subseteq[L,H]$ if and only if $\F\(p\)p^2=\F\(a\)a^2$ for all $p\in J$. If $J$ is unbounded, this implies that $\alpha$ has the Pareto distribution on $J$ with shape parameter $2$. In this case, $\ex \alpha^2=+\infty$, see \Cref{pareto}, which is precluded by the requirement that $p_1<+\infty$. Hence, to replace strict by weak monotonicity, it suffices to exclude distributions that contain intervals $J=[a,b]\subseteq[L,H]$ with $b<+\infty$ in their support, for which $\F\(p\)p^2=\F\(a\)a^2$ for all $p\in J$.
\end{remark}

\subsection{Special Case: Uncertain Reservation Price with Single Product Unit}\label{specialcase}
The case in which uncertainty corresponds to the buyer's valuation, see \cite{La06} and \cite{Zi06}, can be derived as a subcase of \eqref{fixedpoint}. In particular, assume that the seller posts a price $p$ and the buyer's reservation price is $\alpha$ which is randomly drawn from a distribution $F$. If $\alpha\ge p$, then the buyer buys one unit of the product, otherwise she does not buy. This implies that $D\(p\mid \alpha\)=\mathbf{1}{\{p\le \alpha\}}$ and hence, that $\ex\(D\(p\mid \alpha\)\)=\F\(p\)$. In this case and under the assumption that $F$ is absolutely continuous, with $F'=f$, \eqref{fixedpoint} takes the form $p\h\(p\)=1$, for $p<H$, where $\h\(p\):=f\(p\)/\F\(p\)$ is the hazard rate function of $\alpha$. \cite{La99,La01} and \cite{Be07} define $g\(p\):=p\h\(p\)$ as the \emph{generalized failure rate (GFR)} function of $\alpha$ and show that if $\alpha$ has the \emph{increasing generalized failure rate} (IGFR) property, i.e., if $\g\(p\)$ is non-decreasing in $p$ for $p<H$, and if $\g\(p\)$ eventually exceeds $1$, then the seller's optimal price exists and is unique. The GFR function, $\g\(p\)$, corresponds to the price elasticity of demand and hence the assumptions that $\g\(p\)$ is increasing and eventually exceeds $1$ capture the economic intuition of increasing and eventually elastic demand. Similarly to \Cref{thm:main}, the optimal seller's price $p^*$ coincides with the point of unitary price elasticity, i.e., $\g\(p^*\)=1$. \par
The GFR function was introduced in economic applications by \cite{Si76}, who used it to model income distributions. It was further studied in the same context by \cite{Be95} and \cite{Be98} who provide an alternative definition of the IGFR property without requiring the existence of a density. In the context of revenue management, properties of IGFR distributions have been studied by \cite{Zi04}, \cite{Pa05}, \cite{La06} and \cite{Ba13}.

\section{Properties of DGMRD Distributions}\label{secDGMRD}
\Cref{motivation} motivates the study of DGMRD distributions as class of distributions that arise naturally in a seller's pricing optimization problem when the seller is facing a linear stochastic demand. It turns out, that the class of DGMRD distributions is general enough to include as a subclass the IFR, DMRD and IGFR distributions that are widely used in revenue management applications. This statement along with several analytical and closure properties of the DGMRD distributions are established next.\par
For the remaining part, let $X\sim F$ be a non-negative random variable, with support in $L, H$ as in \Cref{intro}, continuous distributions function $F$, tail $\F:=1-F$ and finite expectation $\ex X<+\infty$. Let $\m\(x\)$ denote the MRD function of $X$, as defined in \eqref{mrl}, and $\e\(x\)$ denote the GMRD function of $X$, as defined in \eqref{gmrl}. We say that distribution $X$ has the \emph{decreasing MRD} (DMRD) property, or simply that $X$ is DMRD, if $\m\(p\)$ is non-increasing in $p$ for $p<H$. If additionally, $X$ is an absolutely continuous random variable with $F'=f$ almost everywhere, for some density function $f$, one can easily verify that the derivative $\m'\(p\)$ exists and is given by 
\begin{equation}\label{derivative}\m'\(p\)=\h\(p\)\m\(p\)-1\end{equation} 
where $\h\(p\)=f\(p\)/\F\(p\)$ denotes the \emph{hazard rate function} of $X$, see e.g., \cite{Br03}. 

\subsection{The DGMRD and IGFR Classes of Distributions}\label{sub:classes}
To compare the IGFR and DGMRD classes, we restrict attention to non-negative, absolutely continuous random variables. We, then have
\begin{theorem}\label{classes}
If $X$ is a non-negative, absolutely continuous random variable, with $\ex X<+\infty$, then
\begin{enumerate}[label=$\(\roman*\)\;$, leftmargin=0cm,itemindent=.5cm,labelwidth=\itemindent,labelsep=0cm, align=left, noitemsep,nolistsep]
\item If $X$ is IGFR, then $X$ is DGMRD.
\item If $X$ is DGMRD and $\m\(x\)$ is $\log$-convex, then $X$ is IGFR. 
\end{enumerate}
\end{theorem}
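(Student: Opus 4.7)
Both directions are organised around the single identity
\[
g(p)\,\e(p) \;=\; p\,\h(p)\cdot\frac{\m(p)}{p} \;=\; \h(p)\,\m(p) \;=\; 1 + \m'(p),
\]
where the last equality is \eqref{derivative}. This tightly couples $g$, $\e$ and $\m'$, and both implications drop out once it is exploited correctly.

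\textbf{Part (i): IGFR $\Rightarrow$ DGMRD.} Since $X$ is absolutely continuous and non-negative, I would invoke the hazard representation $\F(x)=\exp\bigl(-\int_0^x \h(t)\,\d t\bigr)$ on $[0,H)$. The substitution $u=pv$ in \eqref{gmrl} yields
\[
\e(p) \;=\; \int_1^{+\infty}\!\frac{\F(pv)}{\F(p)}\,\d v \;=\; \int_1^{+\infty}\exp\!\left(-\int_1^{v}\frac{g(ps)}{s}\,\d s\right)\d v,
\]
where the inner equality uses the further change of variable $t=ps$ to rewrite $\int_p^{pv}\h(t)\,\d t$ as $\int_1^v g(ps)/s\,\d s$. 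For each fixed $s\ge 1$, the IGFR hypothesis makes $p\mapsto g(ps)$ non-decreasing; hence the inner integral is non-decreasing in $p$, the exponential is non-increasing in $p$, and so is $\e(p)$. No differentiation under the integral sign is needed; only pointwise monotonicity of the integrand.

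\textbf{Part (ii): DGMRD with $\m$ log-convex $\Rightarrow$ IGFR.} Taking logarithms in the opening identity,
\[
\log g(p) \;=\; \log\bigl(1+\m'(p)\bigr)\;-\;\log\e(p),
\]
so that, differentiating,
\[
(\log g)'(p) \;=\; \frac{\m''(p)}{1+\m'(p)} \;-\; (\log\e)'(p).
\]
DGMRD gives $(\log\e)'(p)\le 0$, whence $-(\log\e)'(p)\ge 0$. Log-convexity of $\m$ is $\m(p)\m''(p)\ge\bigl(\m'(p)\bigr)^{2}\ge 0$, which forces $\m''(p)\ge 0$; combined with $1+\m'(p)=\h(p)\m(p)>0$ on the support, the first summand is also non-negative. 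Therefore $(\log g)'(p)\ge 0$, i.e., $g$ is non-decreasing, i.e., $X$ is IGFR.

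\textbf{Main obstacle.} The delicate point is the appearance of $\m''$ in (ii): log-convexity of $\m$ is a convexity condition on $\log\m$ and does not by itself guarantee a second derivative. I would either impose differentiability of $\h$, in line with the already standing absolute continuity of $F$, or replace $\m''$ by the monotone right derivative of $\m'$ that convexity provides and argue the inequality Lebesgue-almost everywhere. Part (i) carries no such technical burden, since the monotonicity of $\e$ is read off directly from the exponential representation.
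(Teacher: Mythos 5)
Your proof is correct in substance but takes a genuinely different route from the paper's. The paper proves both parts through stochastic orderings: for (i) it combines Lariviere's characterization of IGFR as $X\hr\lambda X$ for all $\lambda\ge1$ with the fact that the hazard rate order implies the mean residual life order, and with the observation that $X\mrl\lambda X$ for all $\lambda\ge1$ is precisely the DGMRD property; for (ii) it asks when the mrl-order implies back the hr-order, invokes Theorem 2.A.2 of Shaked and Shanthikumar (sufficient condition: $\m(x)/\m_{\lambda X}(x)$ increasing in $x$), and identifies that condition with log-convexity of $\m$. Your part (i) is instead the elementary integral-representation argument --- the paper itself flags essentially this in a footnote to \Cref{moments} as an ``alternative straightforward proof'' --- and your version, which reads the monotonicity of $\e$ off the integrand pointwise without differentiating, is complete and arguably cleaner; what it gives up is the order-theoretic framing that, in the paper, makes part (ii) the natural follow-up question. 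Your part (ii) is a direct computation, and the obstacle you name ($\m''$ need not exist under mere absolute continuity of $F$) is real but repairable exactly as you suggest; the cleanest repair is to write $\g(p)=\h(p)\m(p)\cdot\e(p)^{-1}=(1+\m'(p))\cdot\e(p)^{-1}$ and note that this is a product of two non-negative non-decreasing factors: $1/\e$ by DGMRD, and $1+\m'=\h\,\m\ge0$ by convexity of $\m$, convexity giving monotone one-sided derivatives without any appeal to $\m''$. This reformulation exposes something you did not remark on: your argument only ever uses $\m''\ge0$, i.e.\ convexity of $\m$, which for a positive function is a strictly weaker hypothesis than log-convexity (log-convex implies convex via $\m=\exp(\log\m)$, not conversely). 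So your route in fact establishes part (ii) under a more general condition than the paper states --- directly relevant to the paper's own remark that the log-convexity assumption is restrictive and that a weaker sufficient condition would be of interest.
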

Part $\(i\)$ of \Cref{moments}, has been already been stated by \cite{Be98} and \cite{Ka14}. To derive an alternative proof of part $\(i\)$ and to establish part $\(ii\)$ of \Cref{classes}, we will use the notions of stochastic orderings, see \cite{Sh07} or \cite{Be16}. Let $X_i$ be random variables with distribution, failure rate and MRD functions denoted by $F_i,\h_i$ and $\m_i$ respectively, for $i=1,2$. $X_1$ is said to be smaller than $X_2$ in the \emph{usual stochastic order}, denoted by $X_1\st X_2$, if $F_2\(x\)\le F_1\(x\)$ for all $x\in \mathbb R$. Similarly, $X_1$ is said to be smaller than $X_2$ in the \emph{failure or hazard rate order}, denoted by $X_1\hr X_2$, if $h_2\(x\)\le h_1\(x\)$ for all $x\in\mathbb R$. Finally, $X_1$ is said to be smaller than $X_2$ in the \emph{mean residual life order}, denoted by $X_1\mrl X_2$, if $\m_1\(x\)\le\m_2\(x\)$ for all $x\in\mathbb R$. 
\begin{proof}[Proof of \Cref{classes}]
By Theorem 1 of \cite{La06}, $X$ is IGFR if and only if $X\hr\lambda X$ for all $\lambda\ge1$. By Theorem 2.A.1 of \cite{Sh07}, if $X\hr\lambda X$, then $X\mrl\lambda X$. Now, $\m_{\lambda X}\(x\)=\lambda\cdot\m\(x/\lambda\)$. Hence, for $\lambda\ge1$, $X\mrl\lambda X$ is by definition equivalent to $\m\(x\)\le \m_{\lambda X}\(x\)$ for all $x>0$, which in turn is equivalent to $\e\(x\)\le \e\(x/\lambda\)$ for all $x>0$. As this holds for any $\lambda\ge 1$, the last inequality is equivalent to $\e\(x\)$ being decreasing, i.e., to $X$ being DGMRD.\par
To prove the second part of the Theorem, it suffices to show that $\m\(x\)/\m_{\lambda X}\(x\)$ is increasing in $x$, for $0<x<H$ and all $\lambda\ge 1$. Indeed, if this is the case, Theorem 2.A.2 of \cite{Sh07} implies that $X\mrl \lambda X$ for all $\lambda \ge 1$ is equivalent to $X\hr\lambda X$ for all $\lambda \ge 1$, which as we have seen, is equivalent to $X$ being IGFR. Since $\m_{\lambda X}\(x\)=\lambda\m\(x/\lambda\)$ and $\m\(x\)$ is differentiable, $\m\(x\)/\m_{\lambda X}\(x\)$ is increasing in $x\in\(0,H\)$ for all $\lambda\ge1$ if and only if $\frac{\d}{\d x}\(\frac{\m\(x\)}{\lambda\m\(x/\lambda\)}\)\ge 0$, for all $\lambda \ge 1$, i.e., if and only if $\frac{\m'\(x\)}{\m\(x\)}\ge \frac{\m'\(x/\lambda\)}{\lambda\m\(x/\lambda\)}$, for all $\lambda \ge 1$. This is equivalent to $\frac{\d}{\d x}\ln\(\m\(x\)\)$ being increasing in $x$, i.e., to $\m\(x\)$ being $\log$-convex.
\end{proof}

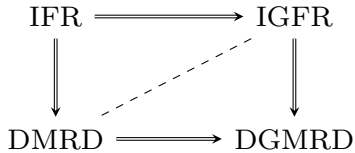
\begin{figure}[!htb]
\centering\vspace{-0.15cm}
\begin{tikzpicture}[scale=4/3, every node/.style={scale=4/3}]
  \matrix (m) [matrix of math nodes,row sep=3em,column sep=4em,minimum width=2em]
  { \text{IFR} &\text{IGFR} \\
    \text{DMRD} & \text{DGMRD} \\};
  \path[-stealth] (m-1-1) edge [double] (m-2-1) edge [double] (m-1-2) 
    (m-1-2) edge [double] (m-2-2)
    (m-2-1.east|-m-2-2) edge [double] (m-2-2)
    (m-1-2) edge [dashed,-] (m-2-1);
\end{tikzpicture}
\caption{Relationship between the IFR, IGFR, DMRD and DGMRD classes of distribution. The IFR property implies both the IGFR and the DMRD properties, which in turn both imply the DGMRD property. All inclusions are proper. The DMRD property neither implies nor is implied by the IGFR property (in the general case), however the IGFR property seems more inclusive in terms of families of distributions that are useful in pricing applications. Finally,  \Cref{classes}-(ii) provides a (rather restrictive) condition under which a DGMRD distribution is also IGFR.}
\label{fig:relations}
\end{figure}
Based on the proof of \Cref{classes}, a DGMRD random variable $X$ is not IGFR if there exists $\lambda \ge 1$ such that $X$ is smaller than $\lambda X$ in the mean residual life order but not in the hazard rate order. Although more involved, the present derivation of part $\(i\)$ utilizes the characterization of both IGFR and DGMRD in terms of stochastic orderings -- $\hr$ for IGFR and $\mrl$ for DGMRD -- and thus, points to the sufficiency condition of part $\(ii\)$. Specifically, in view of the proof of part $\(i\)$, the proof of part $\(ii\)$ reduces to finding conditions, under which, the mean residual life order implies the hazard rate order. Such conditions are provided in Theorem 2.A.2 of \cite{Sh07}. However, as \cite{Sh07} already point out, the condition of $\log$-convexity is restrictive and indeed there are many distributions with $\log$-concave MRD function that are nevertheless IGFR. Hence, it would be of interest to obtain part $\(ii\)$ of \Cref{classes} under a more general condition. \par
Conceptually, the GFR and GMRD functions differ in the same sense that the FR and MRD functions do. Namely, while the GFR function at a point $x$ provides information about the instantaneous behavior of the distribution just after point $x$, the GMRD function provides information about the entire expected behavior of the distribution after point $x$. As the IGFR is trivially implied by the IFR property, the same holds for the DGMRD and DMRD properties. The relationships between all four classes are shown in \Cref{fig:relations}. The IGFR property does not imply, nor is implied by the DMRD property. However, the former seems more inclusive than the latter, cf. \cite{Ba05}, Table 3 and \cite{Ba13}, Table 1. Conversely, DMRD distributions that are not IGFR can be constructed by considering random variables without a connected support. This relies on the observation that if a distribution $X$ is IGFR, then its support must be an interval, see \cite{La01}. However, it remains unclear whether or not, the DMRD property implies the IGFR property when attention is restricted to absolutely continuous random variables with connected support. A commonly used distribution that is DGMRD but not IGFR is the Birnbaum-Saunders distribution.

\begin{example}[Birnbaum-Saunders distribution]\label{birnbaum} The Birnbaum-Saunders (BS) distribution, which is extensively used in reliability applications, see \cite{Jo95}, provides an example of a random variable which is DGMRD but not IGFR for certain values of its parameters. The pdf of $X$ is 
\[f\(x\)=\frac{1}{2a x \sqrt{2\pi}}\(\sqrt{\frac{x}{\beta}}+\sqrt{\frac{\beta}{x}}\)\exp{\(-\frac{1}{2a^2}\(\sqrt{\frac{x}{\beta}}-\sqrt{\frac{\beta}{x}}\)^2\)}, \qquad \text{ for } x>0,\]
where $a>0$ is the shape parameter and $\beta>0$ is the scale parameter. In particular, let $X\sim \text{BS}$ with parameters $a=6$ and $\beta=5$. Using the formula for $f\(x\)$, \Cref{fig:birnbaum} can be obtained numerically. 
\begin{figure}[!htp]
\centering
\includegraphics[width=\textwidth]{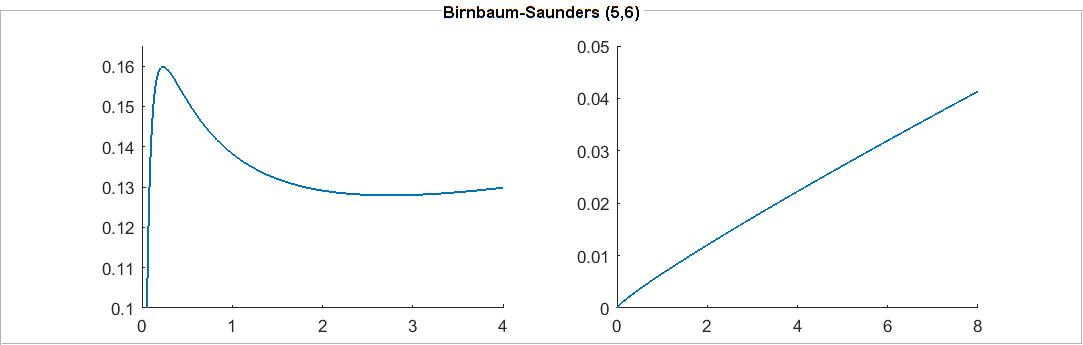}
\caption{Birnbaum-Saunders distribution for $a=6, \beta=5$. The GFR function (left panel) is not monotone increasing in contrast to the price elasticity of expected demand (right panel) which is the inverse of the GMRD function.}
\label{fig:birnbaum}
\end{figure}
Implementing the BS distribution for different $\beta$ and $\gamma$, shows that, unlike other distribution families, as e.g., the Gamma or Beta, the shapes of the GFR and GMRD functions of the BS distribution depend largely on the exact values of its parameters. For different values of its parameters, the BS distribution has either increasing or bathtub-shaped (first decreasing and then increasing) MRD function, \cite{Ta99}.
\end{example}

\subsection{Mixtures of DGMRD Distributions over Disjoint Intervals}\label{sub:mixtures}
As mentioned above, IGFR random variables must have a connected support. Under certain circumstances, this property poses restrictive limitations in economic modelling. For instance, when a seller is uncertain about the exact support of the demand, their belief can be naturally expressed as a mixture of two or more distributions over disjoint intervals. In this case, even if each individual distribution is IGFR, their mixture is certainly not. In this respect, the DGMRD property is more promising, since mixtures of IGFR distributions may still be DGMRD. However, in general, different mixtures of IGFR distributions may or may not be DGMRD even if the only difference is in the mixing weights. Such a case is illustrated in \Cref{ex:uniform}. 
\begin{example}[Mixture of Uniform distributions on disjoint intervals]\label{ex:uniform}
 Let $U\(L,H\)$ denotes the uniform distribution on $\(L,H\)$ and let $X_1\sim U\(1,2\)$ with cdf $F_1$ and $X_2\sim U\(3,4\)$ with cdf $F_2$. Further, let $X_{\lambda}$ with cdf $F_{\lambda}=\lambda F_1+\(1-\lambda\) F_2$ for $\lambda \in \(0,1\)$ describe the seller's belief about the demand. Both $X_1,X_2$ are IFR, hence IGFR, DMRD and DGMRD.
The support of $X_{\lambda}$ is not connected, hence $X_{\lambda}$ is not IGFR for $0<\lambda<1$. Contrarily, the GMRD $\e_{\lambda}$ of $X_\lambda$ is given by
\[\e_{\lambda}\(x\)=\begin{cases}\hfill \lambda \e_1\(x\)+\(1-\lambda\)\e_2\(x\), & 0<x\le 1 \\[0.1cm]\dfrac{\lambda\(2-x\)\e_1\(x\)+\(1-\lambda\)\e_2\(x\)}{\lambda\(2-x\)+\(1-\lambda\)}, & 1\le x\le 2\\ \hfill \e_2\(x\), & 2\le x<4\end{cases}\]
Hence, $\e_\lambda\(x\)$ is decreasing for $x\notin [1,2]$. For $x\in [1,2]$, a direct substitution shows that $\e_{1/4}\(x\)$ is decreasing over $[1,2]$, hence $X_{1/4}$ is DGMRD, while $\e_{3/4}\(x\)$ is first decreasing and then increasing, as shown in \Cref{mixture} and hence $X_{3/4}$ is not DGMRD.
\end{example}
Due to the importance of mixtures of distributions in economic applications, the derivation of conditions under which mixtures of IGFR (or DGMRD) distributions remain DGMRD is an interesting open question. Formally, this question can be formulated as follows. Consider a finite set of probability density functions (pdfs) $f_1\(x\),\dots, f_n\(x\)$ and/or corresponding cumulative distribution functions (cdfs) $F_1\(x\),\dots, F_n\(x\)$ and weights $w_1,\dots, w_n$ such that $w_i \ge 0$ for $i=1,\dots,n$ and $\sum_{i=1}^n w_i = 1$. Then, the pdf $f\(x\)$ and the cdf $F\(x\)$ of the \emph{mixture distribution} can be represented as a convex combination of the individual pdfs or cdfs, respectively, as follows
\begin{align*}
f\(x\):=\sum_{i=1}^nw_if_i\(x\), \qquad F\(x\):=\sum_{i=1}^nw_iF_i\(x\)
\end{align*}
A particular instantiation of the above that covers a broad range of economic applications is to consider three cdfs with disjoint support that correspond to low, modal and high demand realizations, respectively. Using the notation $l,m,h$ to denote the respective weights, with $l+m+h=1$, the resulting demand distribution takes the form $F=lF_1+mF_2+hF_3$. \Cref{ex:uniform} demonstrates that even in the simplest possible case of two different mixtures of two IFR distributions, the resulting distribution may or may not be DGMRD, even if the only difference between the two mixtures is in the weights. Accordingly, the derivation of necessary and/or sufficient conditions under which such mixtures retain the DGMRD property, i.e., the derivation of closure properties under mixtures of the DGMRD class of distributions, remains an interesting open question. In a related study that may prove useful in this direction, \cite{Nav08} confirm that mixtures of standard IFR (and hence DGMRD) distributions, e.g., exponential, may not be DGMRD (it may be bathtub-shaped), and derive sufficient conditions under which asymptotical monotonicity is retained. 
 
\subsection{Limiting Behavior \& Moments of DGMRD Distributions}
The moments of DGMRD distributions with unbounded support are closely linked with the limiting behavior of the GMRD function $\e\(x\)$, as $x\to+\infty$.
\begin{theorem}\label{moments}
Let $X$ be a non-negative DGMRD random variable with $\ex X<+\infty$ and $\displaystyle\lim_{x\to+\infty}\e\(x\)=c$. If $\beta>0$, then
 $c<\frac1\beta$, if and only if $\ex X^{\beta+1}<+\infty$. In particular, $c=0$ if and only if $\ex X^{\beta+1}<+\infty$ for every $\beta>0$.
\end{theorem}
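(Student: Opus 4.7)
The starting point will be the identity $G\(x\):=\int_x^{+\infty}\F\(u\)\du=\m\(x\)\F\(x\)=x\,\e\(x\)\F\(x\)$ obtained directly from \eqref{mrl}. Because $\F$ is continuous, $G$ is $C^1$ with $G'\(x\)=-\F\(x\)$, so combining these two relations produces the logarithmic identity
\[\frac{G'\(x\)}{G\(x\)}=-\frac{1}{x\,\e\(x\)},\qquad x>0.\]
The plan is to integrate this ODE between a fixed $x_0$ and $x$: any one-sided control on $\e$ will translate into a one-sided polynomial bound on $G$, and the moment condition will then follow from $\ex X^{\beta+1}=\(\beta+1\)\int_0^{+\infty}x^{\beta}\F\(x\)\,\d x$ via integration by parts.

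For the forward direction $c<1/\beta\Rightarrow \ex X^{\beta+1}<+\infty$, I would choose $\epsilon>0$ with $c+\epsilon<1/\beta$ and use the hypothesis $\e\(x\)\to c$ to produce $x_0$ such that $\e\(x\)\le c+\epsilon$ for every $x\ge x_0$. Integration of the ODE then yields $G\(x\)\le G\(x_0\)\(x/x_0\)^{-1/\(c+\epsilon\)}$ for $x\ge x_0$. Since $\beta<1/\(c+\epsilon\)$, the boundary term $x^{\beta}G\(x\)$ vanishes at $+\infty$, and integration by parts gives
\[\int_{x_0}^{+\infty}x^{\beta}\F\(x\)\,\d x=x_0^{\beta}G\(x_0\)+\beta\int_{x_0}^{+\infty}x^{\beta-1}G\(x\)\,\d x,\]
whose right-hand side converges because its integrand is $O\(x^{\beta-1-1/\(c+\epsilon\)}\)$ with exponent strictly less than $-1$. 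Adding the manifestly finite contribution from $[0,x_0]$ concludes this direction; note that the argument uses only the upper bound on $\e$, and therefore handles $c=0$ and $c>0$ uniformly.

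For the converse, I would argue by contraposition and assume $c\ge 1/\beta$, which in particular forces $c>0$. Since $\e$ is non-increasing and converges to $c$, the bound $\e\(u\)\ge c$ holds throughout the domain; integrating the ODE in the same way but using this lower bound gives $G\(x\)\ge G\(x_0\)\(x/x_0\)^{-1/c}$ for $x\ge x_0$. Dividing by $x\,\e\(x\)$ and using $\e\(x\)\le \e\(x_0\)$ (monotonicity) yields $\F\(x\)\ge K\,x^{-1-1/c}$ for large $x$, so $\int_{x_0}^{+\infty}x^{\beta}\F\(x\)\,\d x$ diverges since $\beta-1-1/c\ge -1$. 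The \emph{in particular} statement then follows by intersecting over all $\beta>0$: $c=0$ is equivalent to $c<1/\beta$ for every $\beta>0$.

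The only delicacy I anticipate is bookkeeping around the boundary term in the integration-by-parts step, which is where the strict inequality $c<1/\beta$ is really used, and making sure the whole argument goes through without assuming absolute continuity of $F$; continuity of $\F$ together with the identity $G=\m\F$ is enough to run the ODE argument in the ordinary Riemann sense.
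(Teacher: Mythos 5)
Your proof is correct, but it takes a genuinely different route from the paper. The paper works inside the theory of regular variation: for $c>0$ it invokes Proposition 11(b) of Hall--Wellner to translate $\e\(x\)\to c$ into $\F\in\mathcal{RV}\(-1-\frac1c\)$, writes $\F\(x\)=x^{-1-1/c}Z\(x\)$ with $Z$ slowly varying, and applies Feller's convergence criterion for $\int u^{\rho}Z\(u\)\du$; the boundary case $c=1/\beta$ (where Feller's criterion is silent) is handled separately by a stochastic-order comparison with a Pareto$\(1,\beta+1\)$ variable via Theorem 2.A.2 of Shaked--Shanthikumar, and the case $c=0$ by citing the fact that rapidly varying distributions have all moments finite. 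You instead hand-roll the Karamata-type bounds directly from the exact identity $\(\ln G\)'\(x\)=-1/\(x\,\e\(x\)\)$, which is valid since $G\(x\)=\int_x^{+\infty}\F\(u\)\du$ is $C^1$ with $G'=-\F$ under mere continuity of $F$. This buys you three things: the forward direction needs only the convergence $\e\(x\)\to c$ (not monotonicity) and treats $c=0$ and $c>0$ uniformly; the converse, argued by contraposition from $c\ge 1/\beta$, uses the DGMRD property exactly twice (to get $\e\(u\)\ge c$ everywhere and $\e\(x\)\le\e\(x_0\)$) and dispatches the delicate boundary case $c=1/\beta$ with the one-line divergence of $\int x^{-1}\d x$ rather than a hazard-rate-order comparison; and the whole argument is self-contained, citing no external results. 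What the paper's route buys in exchange is the explicit connection to the regular-variation literature (which it then reuses in \Cref{limiting} and in relating the result to Theorem 2 of Lariviere). Your bookkeeping is sound: the sign of the differential inequality, the vanishing of the boundary term $x^{\beta}G\(x\)$ under $\beta<1/\(c+\epsilon\)$, and the step $\F\(x\)=G\(x\)/\(x\,\e\(x\)\)\ge K x^{-1-1/c}$ all check out.
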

\begin{wrapfigure}[15]{r}{0.49\textwidth}
\centering\vspace{-0.1cm}
\includegraphics[width=0.45\textwidth]{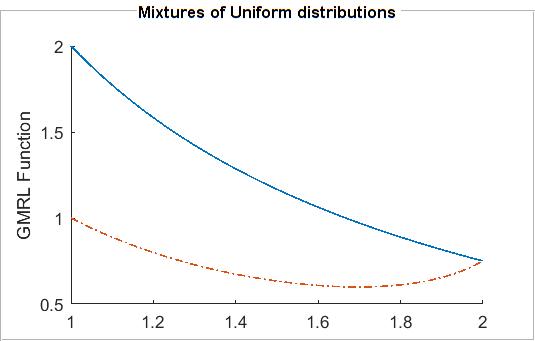}
\caption{\label{mixture}The GMRD function of $X_\lambda$ for $\lambda=1/4$ (solid) and $\lambda=3/4$ (dotted).}
\end{wrapfigure} 
For the proof of \Cref{moments}, we utilize the theory of regularly varying distributions, see \cite{Fe71, Ha81} and \cite{Gu13}. First, observe that if $X$ is a non-negative random variable, then by a simple change of variable, one may rewrite\footnote{By differentiating this expression, provided that $F'=f$ almost everywhere, one obtains an alternative straightforward proof that IGFR implies DGMRD.} $\e\(x\)$ in \eqref{gmrl} as $\e\(x\)=\int_{1}^{+\infty}\frac{\F\(ux\)}{\F\(x\)}\du$. Since we have assumed that $\ex X <+\infty$, $\e\(x\)$ is well defined. We say that $\F$ is \emph{regularly varying at infinity} with exponent $\rho \in \mathbb R$, if $\F\(ux\)/\F\(x\)\to u^{\rho}$ for all $u\ge0$ as $x\to+\infty$. In this case, we write $\F\in \mathcal {RV}\(\rho\)$. If $\F\(ux\)/\F\(x\)\to \infty$ for $0<u<1$ and $\F\(ux\)/\F\(x\)\to 0$ for $u>1$ as $x\to+\infty$, then we say that $\F$ is \emph{rapidly varying at infinity with exponent $-\infty$} or simply that $\F$ is \emph{rapidly varying}, in symbols $\F \in \mathcal{RV}\(-\infty\)$. If $\F \in \mathcal{RV}\(\rho\)$ with $\rho\in \mathbb R$, then we can write $\F$ as $\F\(u\)=u^{\rho}Z\(u\)$, where $Z$ is regularly varying at infinity with exponent $\rho=0$. In this case, we say that $Z$ is \emph{slowly varying at infinity} and write $Z \in \mathcal{SV}$. \cite{Fe71}, see Section VIII.8, shows that if $Z\(u\)>0$ and $Z\in \mathcal{SV}$, then the integral $\int_{0}^{+\infty}u^\rho Z\(u\)\du$ is convergent for $\rho<-1$ and divergent for $\rho>-1$. We are now ready to prove \Cref{moments}.

\begin{proof}[Proof of \Cref{moments}.] Let $c>0$. Then, the convergence of $\ell\(x\)$ to some $c \in \(0,+\infty\)$ is equivalent to $\F$ being regularly varying at infinity with exponent $-1-\frac1c$, in symbols $\F \in \mathcal{RV}\(-1-\frac1c\)$, see Proposition 11(b) of \cite{Ha81}. Hence, there exists a function $Z \in \mathcal{SV}$, such that $\F\(x\)=x^{-1-\frac1c}Z\(x\)$. Since $X$ is non-negative, this implies that for any $\beta>0$, we may write $\ex X^{\beta+1}=\int_{0}^{+\infty}\(\beta+1\)u^{\beta}\F\(u\)\du =\(\beta+1\)\int_{0}^{+\infty}u^{\beta-1-\frac1c}Z\(u\)\du$. Using \cite{Fe71}, the latter integral converges for $\beta<\frac1c$ and diverges for $\beta>\frac1c$. For $c=\frac1\beta$, we employ the approach of \cite{La06} and compare $X$ with a random variable $Y\sim \text{Pareto}\(1,\beta+1\)$, where $1$ is the location parameter and $\beta+1$ the shape parameter. In this case $\m_Y\(x\)=x/\beta$ and $\ex Y^{\beta+1}=+\infty$, which may be used to conclude that $\ex X^{\beta+1}=+\infty$ as well. To see this, observe that since $\e\(x\)$ is decreasing to $1/\beta$ by assumption, we have that $\m_X\(x\)\ge x/\beta=\m_{Y}\(x\)$ and hence $Y\mrl X$. Moreover, $\frac{\m_Y\(x\)}{\m_X\(x\)}=\frac{1}{\beta}\cdot\frac{1}{\e\(x\)}$, which by assumption increases in $x$ for all $x>0$. This implies that $Y$ is smaller than $X$ in the hazard rate order, see Theorem 2.A.2 of \cite{Sh07}, and hence also in the usual stochastic order, i.e., $Y\st X$. Hence, $\ex X^{\beta+1}\ge\ex Y^{\beta+1}=+\infty$. \par 
If $c=0$, then $\F\(x\)$ is rapidly varying with exponent $-\infty$, i.e., $\F \in \mathcal{RV}\(-\infty\)$, see Proposition 11(c) of \cite{Ha81}. It is known, see \cite{Ha70}, that all moments of rapidly varying distributions are finite. Conversely, if $\ex X^{\beta+1}<+\infty$ for every $\beta>0$, then it is a straightforward implication that $c=0$.\end{proof}
\Cref{moments} can be compared with Theorem 2 of \cite{La06}, who states an analogous result for IGFR distributions. \Cref{limiting} establishes the link between the two.
\begin{theorem}\label{limiting}
Let $X$ be an absolutely continuous, non-negative random variable with unbounded support and $\ex X<+\infty$. If $\lim_{x\to+\infty}\g\(x\)$ exists and is equal to $\kappa$ with $\kappa>1$ (possibly infinite), then
\begin{equation}\label{reciprocal}\lim_{x\to+\infty}\e\(x\)=1/\(\kappa-1\)\end{equation}
\end{theorem}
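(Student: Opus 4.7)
The plan is to write $\e(x)$ as a ratio amenable to L'Hôpital's rule, and then convert the limit of that ratio directly into the limit of $\g(x)$. Specifically, from \eqref{gmrl} we have
\[
\e(x) = \frac{\int_x^{+\infty}\F(u)\du}{x\,\F(x)},
\]
so the goal is to show that as $x\to+\infty$ the right-hand side is of the indeterminate form $0/0$, then differentiate numerator and denominator and recognize the answer in terms of $\g(x)=x\h(x)$.

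The first step is verifying the $0/0$ form. The numerator tends to $0$ because $\ex X<+\infty$ is equivalent to $\int_0^{+\infty}\F(u)\du<+\infty$. For the denominator, since $\F$ is non-increasing, $\int_x^{2x}\F(u)\du\ge x\,\F(2x)$, so $2x\,\F(2x)\le 2\int_x^{2x}\F(u)\du\to 0$, giving $x\,\F(x)\to 0$. Next, since $X$ is absolutely continuous with $\F'=-f$, L'Hôpital's rule requires the derivative of the denominator, $\F(x)-xf(x)$, to be nonzero for all sufficiently large $x$. This is where the assumption $\kappa>1$ enters: eventually $\g(x)=xf(x)/\F(x)>1$, so $\F(x)-xf(x)<0$, and the derivative is bounded away from zero.

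Once the hypotheses are in place, L'Hôpital gives
\[
\lim_{x\to+\infty}\e(x)=\lim_{x\to+\infty}\frac{-\F(x)}{\F(x)-xf(x)}=\lim_{x\to+\infty}\frac{1}{\g(x)-1}=\frac{1}{\kappa-1},
\]
where the middle equality is obtained by dividing numerator and denominator by $\F(x)$, and the last step uses the hypothesis $\g(x)\to\kappa$ together with continuity of $1/(y-1)$ on $(1,+\infty)$. The case $\kappa=+\infty$ is handled in the same line with the natural convention $1/(\kappa-1)=0$, since then $\g(x)-1\to+\infty$.

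The only subtlety I anticipate is the verification of the hypotheses of L'Hôpital's rule, namely ensuring both that $x\F(x)\to 0$ (which follows from monotonicity of $\F$ and $\ex X<+\infty$, as sketched) and that the derivative of the denominator stays nonzero near infinity (which is precisely what the assumption $\kappa>1$ guarantees). Everything else is a mechanical computation.
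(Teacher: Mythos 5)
Your proof is correct and follows essentially the same route as the paper: both apply L'H\^opital's rule to the ratio $\int_x^{+\infty}\F(u)\,\d u \,/\, (x\F(x))$, compute the derivative of the denominator as $\F(x)(1-\g(x))$, and pass to the limit using $\g(x)\to\kappa$. You in fact supply slightly more detail than the paper (the monotonicity argument for $x\F(x)\to 0$ and the check that the denominator's derivative is eventually nonzero); the only small imprecision is the phrase \emph{bounded away from zero} --- the quantity $\F(x)(1-\g(x))$ actually tends to $0$, but being nonzero near infinity is all L'H\^opital requires, so nothing is affected.
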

\begin{proof}
Since $\ex X<+\infty$, both $\lim_{x\to+\infty}\int_{x}^{+\infty}\F\(u\)\du$ and $\lim_{x\to+\infty}x\F\(x\)$ are equal to $0$. To compute $\lim_{x\to+\infty}\ell\(x\)$ we use L'H{\^o}pital's rule. We have that $\frac{\d}{\d x}\int_{x}^{+\infty}\F\(u\)\du=-\F\(x\)$ and $\frac{\d}{\d x}\(x\F\(x\)\)=\F\(x\)\(1-\g\(x\)\)$. Hence, under the assumption that $\lim_{x\to+\infty}\g\(x\)=\kappa$, we conclude that \[\lim_{x\to+\infty}\ell\(x\)=\lim_{x\to+\infty} \frac{1}{\g\(x\)-1}=\frac{1}{\kappa-1}.\]
\end{proof}

The inverse relationship in the limiting behavior of $\e\(x\)$ and $\g\(x\)$ in \eqref{reciprocal} is similar in flavor to equation $\(2.1\)$ of \cite{Br03}. In the case that $\kappa<+\infty$, Theorem 2 of \cite{La06} restricted to $n>1$, follows from \Cref{classes,moments}, and equation \eqref{reciprocal}. This approach also covers the case $n=\kappa$, which is not considered in the proof by \cite{La06}. As for IGFR distributions, the Pareto distribution provides a limiting case between decreasing and increasing GMRD distributions, since it is the unique distribution with constant GMRD function.

\begin{example}[Pareto distribution]\label{pareto} Let $X$ be Pareto distributed with pdf $f\(x\)=kL^kx^{-\(k+1\)}\mathbf{1}_{\{L\le x\}}$, and parameters $L>0$ and $k > 1$ (for $0<k\le 1$ we get $\ex X=+\infty$, which contradicts the basic assumptions of our model). To simplify, let $L=1$, so that $f\(x\)=k x^{-k-1}\mathbf{1}_{\{1 \leq x \}}$, $\F\(x\) = x^{-k}\mathbf{1}_{\{1 \leq x\}}$, and $\ex X=\frac{k}{k-1}$. The mean residual demand of $X$ is given by $\m\(x\)=\frac{x}{k-1}+\frac{k}{k-1}\(1-x\)_+$ and, hence, is decreasing for $x<1$ and increasing for $x\ge 1$. However, the GMRD function $\e\(x\)=\frac{1}{k-1}$ is decreasing for $0<x<1$ and constant for $x\ge1$, hence, $X$ is DGMRD. Similarly, for $1\le x$ the failure (hazard) rate $\h\(x\)=kx^{-1}$ is decreasing, but the generalized failure rate $\g\(x\)=k$ is constant and, hence, $X$ is IGFR. In this case, the seller's payoff function, \eqref{revenue}, becomes 
\begin{equation*}R\(x\)=x\m\(x\)\F\(x\)=\begin{cases}x\(\dfrac{k}{k-1}-x\), & \text{if } 0 \le x < 1\\[0.2cm]
\dfrac{ x^{2-k}}{\(k-1\)}, & \text{if } x \ge 1, \end{cases} \end{equation*}
which diverges as $x\to +\infty$, for $k < 2$ and remains constant for $k=2$. In particular, for $k\le 2$, the second moment of $X$ is infinite, i.e., $\ex X^2=+\infty$, and also $\lim_{x\to+\infty}\e\(x\)=\frac{1}{k-1}\ge1$ and $\lim_{x\to+\infty}\g\(x\)=k\le2$, which agrees with \Cref{moments}. 
On the other hand, for $k > 2$, there exists a unique fixed point $x^* = \frac{k}{2\(k-1\)}$, as expected. 
\end{example}

\section{Closure Properties of the DGMRD Class of Distributions}\label{subdmrl}
\cite{Pa05} and \cite{Ba13} study closure properties of the IFR and IGFR classes under operations that involve continuous transformations, truncations, and convolutions. Such operations are important in economic applications, as they can be used to model changes or updates in the seller's beliefs (transformations and truncations) or aggregation of demands from different markets (convolutions). Resembling the IFR when compared to the IGFR class, the DMRD class exhibits better closure properties than the DGMRD class.
\begin{theorem}\label{thm:concave}
Let $X$ be a non-negative, absolutely continuous, DMRD random variable and let $\phi:\mathbb R_+\to \mathbb R_+$ be a strictly increasing, concave and differentiable function. Then, $Y:=\phi\(X\)$ is DMRD. 
\end{theorem}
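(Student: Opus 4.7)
The plan is to reduce the DMRD property of $Y=\phi\(X\)$ to two elementary inputs: the tangent inequality for concave functions, and the DMRD property of $X$ recast via \eqref{derivative} as $\h_X\(x\)\m_X\(x\)\le 1$. Since $\phi$ is strictly increasing, $\F_Y\(\phi\(x\)\)=\F_X\(x\)$, and hence
\begin{equation*}
\m_Y\(\phi\(x\)\) \;=\; \ex\left[\phi\(X\)-\phi\(x\)\mid X>x\right] \;=:\; g\(x\).
\end{equation*}
Because $\phi^{-1}$ is strictly increasing, $\m_Y$ is non-increasing in its argument if and only if $g$ is non-increasing in $x$, so it suffices to prove $g'\(x\)\le 0$.

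Starting from the equivalent representation $g\(x\)=\frac{1}{\F_X\(x\)}\int_x^{+\infty}\phi'\(t\)\F_X\(t\)\,\d t$, obtained from the identity $\phi\(X\)-\phi\(x\)=\int_x^X\phi'\(t\)\,\d t$ together with Fubini's theorem, a direct differentiation using $\F_X'=-f_X$ yields
\begin{equation*}
g'\(x\) \;=\; \h_X\(x\)\,g\(x\) \;-\; \phi'\(x\),
\end{equation*}
where $\h_X=f_X/\F_X$ is the hazard rate of $X$. The problem therefore reduces to the pointwise bound $\h_X\(x\)\,g\(x\)\le \phi'\(x\)$.

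To establish this bound, I would invoke concavity of $\phi$ in the form $\phi\(z\)-\phi\(x\)\le \phi'\(x\)\(z-x\)$ for every $z\ge x$, and then take conditional expectation given $X>x$ to obtain
\begin{equation*}
g\(x\) \;\le\; \phi'\(x\)\,\ex\left[X-x\mid X>x\right] \;=\; \phi'\(x\)\,\m_X\(x\).
\end{equation*}
Multiplying by $\h_X\(x\)\ge 0$ and applying the DMRD property of $X$ in the form $\h_X\(x\)\m_X\(x\)\le 1$ (via \eqref{derivative}) yields $\h_X\(x\)\,g\(x\)\le \phi'\(x\)$, hence $g'\(x\)\le 0$, as required.

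I do not anticipate a substantive obstacle: the argument cleanly separates the role of DMRD for $X$, which is captured entirely by the scalar inequality $\h_X\m_X\le 1$, from the role of concavity, which enters only through the tangent bound. The differentiation of $g$ and the Fubini exchange are standard under the hypotheses that $F$ is absolutely continuous with $\ex X<+\infty$, and the strict monotonicity and differentiability of $\phi$ guarantee that the change of variable $y=\phi\(x\)$ preserves the monotonicity of $\m_Y$.
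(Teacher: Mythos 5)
Your proposal is correct and follows essentially the same route as the paper's proof: both reduce to the representation $\m_Y\(\phi\(x\)\)\F\(x\)=\int_x^{+\infty}\F\(t\)\phi'\(t\)\,\d t$, bound the integral using $\phi'\(t\)\le\phi'\(x\)$ for $t\ge x$ (your tangent inequality, integrated, is exactly this), and conclude via the identity \eqref{derivative} that the derivative is at most $\m'_X\le 0$. The only difference is cosmetic — you work in the variable $x=\phi^{-1}\(y\)$ while the paper differentiates $\m_Y$ directly in $y$.
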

\begin{proof}
Let $F$ denote the cdf of $X$, $f$ its pdf and $\h$ its hazard rate. Then, for $y>0$, $F_Y\(y\)=F\(\phi^{-1}\(y\)\)$ and $f_Y\(y\)=f\(\phi^{-1}\(y\)\)\frac{1}{\phi'\(\phi^{-1}\(y\)\)}$, where $\phi^{-1}$ denotes the inverse of $\phi$. Hence $\m_Y\(y\)=\(\F\(\phi^{-1}\(y\)\)\)^{-1}\cdot\int_{y}^{+\infty}\F\(\phi^{-1}\(u\)\)\du=\(\F\(\phi^{-1}\(y\)\)\)^{-1}\cdot\int_{\phi^{-1}\(y\)}^{+\infty}\F\(u\)\phi'\(u\)\du$. By \eqref{derivative}, and since $\h_Y\(y\)=\h\(\phi^{-1}\(y\)\)\cdot \frac{1}{\phi'\(\phi^{-1}\(y\)\)}$, we conclude that $\m'_Y\(y\)=\h\(\phi^{-1}\(y\)\)\cdot \(\F\(\phi^{-1}\(y\)\)\)^{-1}\cdot\int_{\phi^{-1}\(y\)}^{+\infty}\F\(u\)\frac{\phi'\(u\)}{\phi'\(\phi^{-1}\(y\)\)}\du-1$. Concavity of $\phi$ implies that for $u>\phi^{-1}\(y\)$, $\frac{\phi'\(u\)}{\phi'\(\phi^{-1}\(y\)\)}\le 1$. Thus, $\m'_Y\(y\)\le \h\(\phi^{-1}\(y\)\)\m\(\phi^{-1}\(y\)\)-1=\m'\(\phi^{-1}\(y\)\)\le 0$, since $\m\(y\)$ is decreasing by assumption.
\end{proof}
Hence, the class of absolutely continuous, DMRD random variables is closed under strictly increasing, differentiable and concave transformations. 
By \Cref{thm:concave}, it is immediate that
\begin{corollary}\label{cor:shift}
Let $X$ be a non-negative, absolutely continuous, DMRD random variable. Then,
\begin{enumerate}[label=$\(\roman*\)\;$,leftmargin=0cm,itemindent=.5cm,labelwidth=\itemindent,labelsep=0cm, align=left, noitemsep, nolistsep]
\item for any $\alpha>0$ and $\beta \in \mathbb R$, $\alpha X+\beta$ is DMRD, (i.e., the DMRD class is closed under positive scale transformations and shifting).
\item for any $0<\alpha\le 1$, $X^{\alpha}$ is DMRD.  
\end{enumerate}
\end{corollary}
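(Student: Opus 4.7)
Both claims reduce to specializations of Theorem \ref{thm:concave} with appropriate choices of $\phi$, supplemented by an elementary translation argument for the shift in part $(i)$. The plan is to isolate each elementary operation and verify the hypotheses of the theorem for the monotone transformations involved.

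For part $(i)$, I would first dispose of the shift by a direct computation from the definition of the MRD function. For $Y := X + \beta$, the tail satisfies $\F_Y(y) = \F(y - \beta)$, so by \eqref{mrl}, $\m_Y(y) = \m_X(y - \beta)$ on the (shifted) support of $Y$. Hence monotonicity of $\m_X$ transfers to $\m_Y$ irrespective of the sign of $\beta$, which is why the shift can take us outside $\mathbb{R}_+$ without trouble and sidesteps the domain assumption of Theorem \ref{thm:concave}. For the scaling, I would apply Theorem \ref{thm:concave} with $\phi(x) := \alpha x$, $\alpha > 0$: this map is strictly increasing, differentiable, and trivially concave as a map $\mathbb{R}_+ \to \mathbb{R}_+$, so $\alpha X$ is DMRD. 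Composing scaling and shifting (in either order) completes part $(i)$.

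For part $(ii)$, I would invoke Theorem \ref{thm:concave} with $\phi(x) := x^{\alpha}$ and $0 < \alpha \le 1$. On $\mathbb{R}_+$, this map is strictly increasing, maps $\mathbb{R}_+$ into itself, is differentiable on $(0,+\infty)$, and is concave since $\phi''(x) = \alpha(\alpha-1)x^{\alpha-2} \le 0$ in the prescribed range. The conclusion is then immediate from Theorem \ref{thm:concave}.

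The only minor subtlety I foresee is the failure of differentiability of $\phi(x) = x^{\alpha}$ at $x=0$ when $\alpha < 1$, but this is harmless since the proof of Theorem \ref{thm:concave} uses $\phi'$ only on the open support of $X$, where it is smooth. I do not expect any genuine obstacle; the corollary is essentially a mechanical application of the theorem, and the shift case requires only a one-line computation with the MRD definition.
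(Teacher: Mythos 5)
Your proposal is correct and matches the paper's intent: the paper gives no written proof, stating only that the corollary is immediate from \Cref{thm:concave}, and your argument is precisely that application with $\phi\(x\)=\alpha x$ (or $\alpha x+\beta$) and $\phi\(x\)=x^{\alpha}$. Your extra care in handling the shift by a direct computation with the MRD definition (to cover $\beta<0$, where the affine map leaves $\mathbb R_+$) and in noting the harmless non-differentiability of $x^{\alpha}$ at $0$ fills in exactly the details the paper elides.
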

More results about the DMRD class can be found in \cite{Ba05, Lax06} and \cite{Sh07}. Turning to the DGMRD class, it is straightforward (thus omitted) to show that the DGMRD property is preserved under positive scale transformations and left truncations. For a random variable $X$ with support inbetween $L$ and $H$, and any $\alpha\in \(L,H\)$, the left truncated random variable $X_{\alpha}$ is defined as $X_{\alpha}=X\mathbf{1}_{\{X\ge \alpha\}}$.
\begin{theorem}\label{thm:truncate}
Let $X$ be a DGMRD random variable with support inbetween $L$ and $H$ with $0\le L<H\le +\infty$. Then, 
\begin{enumerate}[label=$\(\roman*\)\;$, leftmargin=0cm,itemindent=.5cm,labelwidth=\itemindent,labelsep=0cm, align=left, noitemsep, nolistsep]
\item for any $\lambda>0$, the random variable $\lambda X$ is DGMRD (i.e., the DGMRD class is closed under positive scale transformations).
\item for any $\alpha\in\(L,H\)$, the left truncated random variable $X_{\alpha}$ has the same GMRD function as $X$ on $\(\alpha,H\)$. In particular, the DGMRD class is closed under left truncations.
\end{enumerate}
\end{theorem}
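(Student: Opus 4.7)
The plan is, for each of the two claims, to reduce everything to a one-line identity relating the MRD (equivalently, the GMRD) function of the transformed variable to that of $X$, from which monotonicity transfers for free.

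For part $\(i\)$, I would set $Y:=\lambda X$ with $\lambda>0$ and start from $\F_Y\(y\)=P\(\lambda X>y\)=\F\(y/\lambda\)$. Substituting $v=u/\lambda$ in the defining integral of $\m_Y$ gives $\int_y^{+\infty}\F_Y\(u\)\du=\lambda\int_{y/\lambda}^{+\infty}\F\(v\)\,\d v$, so that $\m_Y\(y\)=\lambda\,\m\(y/\lambda\)$ (a relation already used in the proof of \Cref{classes}) and hence $\e_Y\(y\)=\m\(y/\lambda\)/\(y/\lambda\)=\e\(y/\lambda\)$. Since $y\mapsto y/\lambda$ is strictly increasing, $\e_Y$ inherits the monotonicity of $\e$ on $\(0,\lambda H\)$, giving the DGMRD conclusion.

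For part $\(ii\)$, I would fix $\alpha\in\(L,H\)$ and note that for any $y\ge\alpha$ the event $\{X\mathbf{1}_{\{X\ge\alpha\}}>y\}$ reduces to $\{X>y\}$, because $X>y\ge\alpha$ already forces $X\ge\alpha$. Hence $\F_{X_\alpha}\(y\)=\F\(y\)$ for all $y\ge\alpha$, and substituting into the definition of the MRD the two $\F$ factors cancel to give $\m_{X_\alpha}\(y\)=\m\(y\)$ and therefore $\e_{X_\alpha}\(y\)=\e\(y\)$ for every $y\in\(\alpha,H\)$. Closure under left truncation then follows at once: if $\e$ is non-increasing, its restriction to $\(\alpha,H\)$ is non-increasing as well, so $X_\alpha$ is DGMRD.

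Both computations are routine and I expect no deep obstacle. The only point that warrants a little care is in part $\(ii\)$: the map $X\mapsto X\mathbf{1}_{\{X\ge\alpha\}}$ may place an atom at $0$ when $\alpha>L$, so one should verify that $\F_{X_\alpha}$ is constant on $\(0,\alpha\)$ and agrees with $\F$ beyond $\alpha$; this is immediate and leaves the restriction to $\(\alpha,H\)$ as the relevant domain for the GMRD comparison.
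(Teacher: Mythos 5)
Your proposal is correct. The paper in fact omits the proof of this theorem, calling it straightforward, and your computation is exactly the intended routine argument: the scaling identity $\m_{\lambda X}\(y\)=\lambda\,\m\(y/\lambda\)$ you invoke is the same one the paper already uses in the proof of \Cref{classes}, and the observation that $\F_{X_\alpha}$ agrees with $\F$ on $[\alpha,H)$ (so the GMRD functions coincide there, the atom at $0$ being irrelevant) is all that part $\(ii\)$ requires.
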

\begin{wrapfigure}[15]{r}{0.49\textwidth}
\centering\vspace{-0.2cm}
\includegraphics[width=0.45\textwidth]{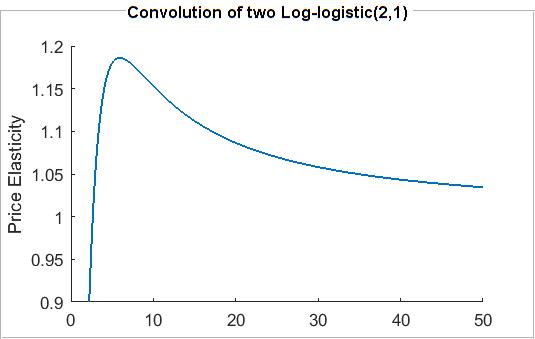}
\caption{\label{loglog} The price elasticity (inverse of the GMRD function) for the convolution of 2 standard Log-logistic$\(k=2\)$ random variables.}
\end{wrapfigure}
In Proposition 1, \cite{Ba13} establish that IGFR distributions are closed under right truncations as well. It remains unclear whether DGMRD distributions are also closed under right truncations or not. On the other hand, as expected, the DGMRD class inherits some closure counterexamples from the IGFR class. \cite{Ba13} illustrate that the IGFR property is not preserved under shifting and convolutions. Both of their examples establish the same conclusions for the DGMRD property, as shown below. \par
Using their notation, the GMRD function of the Pareto distribution of the second kind (Lomax distribution) is $\e\(x\)=\frac{1}{k-1}\(\frac{B-A}{x}+1\)$, for $x\ge A$, where $A$ denotes the location parameter. Hence, when $A=0$ (i.e., no shift) or $A<B$, the GMRD is decreasing, whereas, for $A>B$, the GMRD function is increasing. Similar to the behavior exhibited by the GFR function, the GMRD function is constant for $A=B$, and, in particular for $A=B=1$, which corresponds to the standard Pareto distribution. To show that the IGFR class is not closed under convolution, \cite{Ba13} consider the sum of two log-logistic distributions. The log-logistic distribution is IGFR, and, hence, DGMRD. Using their formula for $F$, one may establish numerically that the price elasticity $\varepsilon\(p\)=\e\(p\)^{-1}$ is first increasing and then decreasing, as can be seen in \Cref{loglog}. 

\section{Discussion: Summary, Limitations \& Directions for Future Work}\label{sec:discussion}
In this paper, we considered the revenue maximization problem of a seller in a market with linear stochastic demand. We expressed the price elasticity of expected demand in terms of the mean residual demand (MRD) function, cf. \eqref{mrl}, of the demand distribution and characterized the seller's optimal prices as fixed points of the MRD function. This led to the description of markets with increasingly elastic demand purely in terms of the demand distribution and in turn, to a novel unimodality condition. Namely, the seller's optimal price in a linear stochastic market exists and is unique if the demand distribution has the decreasing generalized mean residual demand (DGMRD) property and finite second moment. Motivated by the fact that DGMRD distributions strictly generalize the widely used distributions with increasing generalized failure rate (IGFR), we then turned to the study of more technical, yet economically interpretable properties of DGMRD distributions.\par
While these findings expand our understanding on the distributions that are useful in economic applications, the DGMRD class has its own limitations. First, its applicability appears to be limited to the case of linear demand. However, the extensive use of the linear model in economic problems even as a meaningful approximation of general demand curves, cf. \cite{Co15,Ch17} among others, and its detailed analysis via the currently derived technical results, cf. \cite{Leon20,Leo20,Leon21}, provide further support for the study of the DGMRD class of distributions.\footnote{Preliminary versions of these works appear in \cite{Bel18,Kok18}.} In any case, demonstrating the applicability of the DGMRD property beyond the linear setting poses an interesting open direction for future research. \par
Second, while DGMRD distributions strictly generalize IGFR distributions, the latter are already sufficiently inclusive and more easy to handle under the simplifying assumption of the existence of a density. The advantage of analytical tractability becomes apparent when studying the relationship between IFR, IGFR, DMRD and DGMRD distributions. Despite the partial understanding obtained in \Cref{sub:classes} and the illustration in \Cref{fig:relations}, some questions remain difficult to answer precisely due to the technical challenges that arise when handling the integrals that appear in the DGMRD condition. Overcoming this challenges could yield further insight in two directions: (1) In deriving a less restrictive condition under which a DGMRD distribution is also IGFR, cf. \Cref{classes}-(ii) and (2) In exploring whether the DMRD property implies the IGFR property when restricting attention to absolutely continuous random variables with connected support since all provided counterexamples, i.e., examples of DMRD that are not IGFR, violate precisely (one of) these two necessary conditions of the IGFR property, cf. \Cref{sub:classes}. \par
Another manifestation of the technical challenges that stem from handling the more involved DGMRD condition instead of the more tractable IGFR condition appears in the analytical study of the price elasticity curve. Since the latter is fully determined by the MRD function of the demand distribution, cf. equation \eqref{elasticity}, its analysis hinges on the shape of the MRD function. In turn, the MRD function is represented via an indefinite integral, cf. equation \eqref{mrl} and for arbitrary distributions it may behave in many different ways, see \cite{Sh07,Lax06}, which renders the study of the price elasticity curve technically challenging. Yet, based on the current findings, an interesting extension is to study the location of the points of unitary elasticity -- i.e., the seller's optimal prices -- for DMRD, DGMRD or even arbitrary demand distributions. In concrete terms, the main technical challenge to overcome here is to analytically argue about the \emph{location} and the \emph{qualitative properties of the fixed points} of the MRD function. \par
Finally, while the argument of analytical tractability of the IGFR class is arguably important, the GMRD function arises naturally in revenue management applications on pricing (or stocking) decisions under stochastic demand that are not covered by the IGFR condition. Accordingly, both the GMRD function and the DGMRD condition can be of broader interest to the operations research literature, at least in terms of a more succinct and unified representation. In a concrete application, that also serves as a natural motivation to study DGMRD random variabless, mixtures of (possibly IGFR) distributions over disjoint intervals, although important for scenario-analysis of economic models, are not covered by the IGFR property, cf. \Cref{sub:mixtures}. The DGMRD property seems to offer a promising resolution to this deadlock. However, as illustrated in \Cref{ex:uniform}, mixtures of DGMRD distributions may also not be DGMRD. Thus, an important open question this direction is to study closure propertie of the DGMRD class of distributions under mixtures, i.e., to determine necessary and/or sufficient conditions under which mixtures of IGFR or DGMRD distributions are again DGMRD. 
\section*{Acknowledgements}
Stefanos Leonardos gratefully acknowledges support by a scholarship of the Alexander S. Onassis Public Benefit Foundation and partial support from Ministry of Education (MOE) of Singapore and the MOE AcRF Tier 2 Grant 2016-T2-1-170.

\bibliographystyle{plain} 
\bibliography{distbib}
\end{document}